\newtheorem{theorem}{\bf Theorem}[section]
\newtheorem{corollary}[theorem]{\bf Corollary}
\newtheorem{lemma}[theorem]{\bf Lemma}
\newtheorem{proposition}[theorem]{\bf Proposition}
\newtheorem{example}[theorem]{\bf Example}
\newtheorem{question}{\bf Question}
\newtheorem*{IMConjectureForGroups}{\bf The Infinite Motion Conjecture for Permutation Groups}
\newtheorem*{IMConjectureForGraphs}{\bf The Infinite Motion Conjecture for Graphs}
\newtheorem*{MotionConjectureForGaphs}{\bf The Motion Conjecture for Graphs}
\newenvironment{edit}{\color{red}}{\color{black}}
\newcommand{\aed}{\begin{edit}}
\newcommand{\zed}{\end{edit}}
\newenvironment{change}{\color{blue}}{\color{black}}
\newcommand{\ach}{\begin{change}}
\newcommand{\zch}{\end{change}}
\newcommand{\A}{\ensuremath{\operatorname{Aut}}}
\newcommand{\Sym}{\ensuremath{\operatorname{Sym}}}
\newcommand{\D}{\ensuremath{\operatorname{D}}}
\newcommand{\m}{\ensuremath{\operatorname{m}}}
\newcommand{\supp}{\ensuremath{\operatorname{supp}}}
\newcommand{\cl}{\ensuremath{\operatorname{cl}}}
\newcommand{\N}{{\mathbb{N}}}
\newcommand{\Z}{{\mathbb{Z}}}
\newcommand{\Q}{{\mathbb{Q}}}
\title{Infinite Motion and 2-Distinguishability of Graphs and Groups}
\author{Wilfried Imrich}
\affil{Montanuniversit\"at Leoben,

8700 Leoben, Austria.

imrich@unileoben.ac.at}
\author{Simon M.~Smith}
\affil{Mathematics Department,

New York City College of Technology,

City University of New York, USA.

sismith@citytech.cuny.edu}
\author{Thomas W.~Tucker}
\affil{Department of Mathematics, Colgate University,

Hamilton, New York, USA.

ttucker@colgate.edu}
\author{Mark E.~Watkins}
\affil{Department of Mathematics, Syracuse University,

Syracuse, New York, USA.

mewatkin@syr.edu}
\begin{document}
\maketitle

\begin{abstract}
A group $A$ acting faithfully on a set $X$ is $2$-{\em distinguishable} if there is a $2$-coloring of $X$ that is not preserved by any nonidentity element of $A$, equivalently, if there is a proper subset of $X$ with trivial setwise stabilizer. The {\em motion} of an element $a \in A$ is the number of points of $X$ that are moved by $a$, and the {\em motion} of the group $A$ is the minimal motion of its nonidentity elements. For finite $A$, the Motion Lemma says that if the motion of $A$ is large enough (specifically at least $2\log_2 |A|$),
then the action  is $2$-distinguishable.  For many situations where $X$ has a combinatorial or algebraic structure, the Motion Lemma implies the action of $\A(X)$ on $X$ is 2-distinguishable in all but finitely many instances.  

We prove an infinitary version of the Motion Lemma for countably infinite permutation groups, which states that infinite motion is large enough to guarantee $2$-distinguishability. From this we deduce a number of results, including the fact that every locally finite, connected graph whose automorphism group is countably infinite is $2$-distinguishable. One cannot extend the Motion Lemma to uncountable permutation groups, but nonetheless we prove that $2$-distinguishable permutation groups with infinite motion are dense in the class of groups with infinite motion.
We conjecture an extension of the Motion Lemma which we expect holds for a restricted class of uncountable permutation groups, and we conclude with a list of open questions. The consequences of our results are drawn for orbit equivalence of infinite permutation groups.
\end{abstract}

\noindent {\bf Key words}: Distinguishing number; Distinguishability; Automorphism;
Infinite graph; Infinite permutation group; Motion; Orbit-equivalence.

\bigskip\noindent
{\bf AMS subject classification (2000)}: 05C25, 05C63, 20B27, 03E10

%
% --- Intro
%
\section{Introduction}
\label{sec:intro}

The {\em distinguishing number} $\D(A,X)$ of a group $A$ acting faithfully on a set $X$ is the least
cardinal number $d$ such that $X$ has a labeling with $d$
labels that is preserved only by the identity of $A$.
Distinguishability was introduced in \cite{alco-96}; subsequently a large number of papers have been written on this subject (see for example \cite{wazh-07, imkltr-07, LNS, STW} and
compare \cite{BaCa} for related results).  We are mostly interested in cases where $D(A,X)\leq 2$, which means that some proper subset of $X$ has trivial setwise stabilizer in $A$ (or that there is a regular orbit in the action of $A$ on the subsets of $X$).

Distinguishability shares much with the notion of orbit-equivalence. Two permutation groups $A, B \leq \Sym(X)$ are {\em orbit-equivalent} if they have the same orbits on the set of finite subsets of $X$ (see \cite{BaCa} for example); $A$ and $B$ are {\em strongly orbit-equivalent} if they have the same orbits on the entire power set $2^X$.  Note that if $A$ is $2$-distinguishable, then it cannot be strongly orbit-equivalent to any proper subgroup of $A$ (see the proof of Corollary~\ref{cor:evansoecorr}); in this way a $2$-distinguishable permutation group is minimal with respect to strong orbit-equivalence. \\

In many contexts, when the underlying set $X$ is finite, all but finitely many group actions have distinguishing number $2$. This is true for automorphism groups of finite maps (\cite{tu-06}), for finite vector spaces (\cite{klwozh-06}), for groups (\cite{CoTu}), for iterated Cartesian products of a graph (\cite{Al-2005,KlZh,ImKl}), and for transitive permutation groups  having a base (a set whose pointwise stabilizer is trivial) of bounded size (\cite{CoTu, CNS}).

Underpinning these results is the Motion Lemma of Russell and Sundaram (\cite{RuSu}). The {\em motion} of a permutation $a \in A$ (also called its {\em degree}) is the number of elements in $X$ that are not fixed by $a$, and the {\em motion} of $A$ is the infimum of the motion of its nonidentity elements (often called the {\em minimal degree} of $A$).  The Motion Lemma states that if the motion of $A$ is large enough relative to the size of $A$, then $\D(A,X)=2$.

In this paper we explore the extent to which infinite motion affects the distinguishing number of infinite permutation groups. Throughout, $A$ denotes a group of permutations of a countably infinite set $X$.
We prove an infinitary version of the Motion Lemma for countable permutation groups (Lemma~\ref{lem:CountableMotionLemma}), and use it to extend many of the results in \cite{CoTu}. We also prove that a countably infinite permutation group $A \leq \Sym (X)$ that is closed in the permutation topology (see Section~\ref{sec:preliminaries}) has distinguishing number $2$ whenever all orbits of point stabilizers are finite (Theorem~\ref{thm:evanscorr}). From this we obtain the surprising result that any connected, locally finite graph whose automorphism group is countably infinite has distinguishing number $2$ (Corollary~\ref{countable_aut_gp}).
Theorem~\ref{thm:evanscorr} has an interesting corollary for orbit-equivalence: suppose $A$ is an infinite, subdegree-finite, closed permutation group; if $|A| < 2^{\aleph_0}$, then $A$ is not strongly orbit-equivalent to any proper subgroup (Corollary~\ref{cor:evansoecorr}).

Extending the Motion Lemma to uncountable permutation groups fails,
as there exist uncountable permutation groups with infinite motion and infinite distinguishing number (see Section~\ref{sec:uncountable}). Instead, we prove a density result, which states that every group $A \leq \Sym(X)$ with infinite motion is the closure (in the permutation topology) of a $2$-distinguishable group with infinite motion (Theorem~\ref{dense}). Again this has a natural consequence for orbit-equivalence: groups that are minimal with respect to strong orbit-equivalence are dense in the class of permutation groups with infinite minimal degree (Corollary~\ref{cor:oedense}).

We conjecture that any closed group $A \leq \Sym(X)$ with infinite motion is $2$-distinguishable whenever all orbits of its point stabilizers are finite (see {\em The Infinite Motion Conjecture for Permutation Groups} in Section~\ref{sec:uncountable}); this is an extension of a conjecture first made in \cite{tu-06}. In the process of trying to prove this conjecture a number of interesting questions have arisen, which we detail in Section~\ref{sec:questions}.

%
% --- Preliminaries
%
\section{Preliminaries}
\label{sec:preliminaries}

We write $(A, X)$ to denote a group $A$ acting on a set $X$. Since all actions in this paper are faithful, we consider $A$ to be a subgroup of the symmetric group $\Sym(X)$. The group $A$ is {\em subdegree-finite} if for all $x\in X$, all orbits of the point-stabilizer $A_x$ in its action on $X$ are finite. Notice that if a graph is locally finite, then its automorphism group must be subdegree-finite. We follow the convention that a set $X$ is {\em countable} if $|X|\leq\aleph_0$ and say that $X$ is {\em countably infinite} if $X$ is both countable and infinite.

For a given coloring of $X$ and a faithful action of $A$ on $X$, we say that $a\in A$ {\em preserves} the coloring if $ax$ and $x$ have the same color for all $x\in X$. The {\em support} of an element $a\in A$, denoted $\supp(a)$, is the set $\{x \in X : ax \not = x\}$; we call $\m(a):=|\supp(a)|$ the {\em motion} of $a$. The {\em motion of the group} $A$ is $\m(A):=\min\{\m(a): a\in A\setminus\{1\}\}$. A group is said to have {\em infinite motion} if $\m(A)$ is any infinite cardinal. A {\em base} of $(A, X)$ is a subset $Y$ of $X$ whose pointwise stabilizer $A_{(Y)}:= \bigcap\{A_y:y\in Y\}$ is trivial. If $G$ is a graph, then $m(G):=m(\A(G))$, where $\A(G)$ is acting on the vertex set $VG$ of $G$.

The following lemma is a useful tool for analyzing the action of infinite, subdegree-finite permutation groups. Our proof assumes the Axiom of Countable Choice.

\begin{lemma}\label{subdegree} Let $A\leq\Sym(X)$, and suppose that $A$ is subdegree-finite and has an infinite orbit. For any finite subsets $Y, Z$ of $X$, there exists an element $a\in A$ such that $Y\cap aZ=\emptyset$
 \end{lemma}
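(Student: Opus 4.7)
The plan is to argue by contradiction, after first establishing a useful consequence of the hypotheses: every $A$-orbit on $X$ must be infinite. Indeed, suppose some $z \in X$ had a finite $A$-orbit. Then $A_z$ would have finite index in $A$; choosing coset representatives $A = \bigsqcup_{i=1}^{|Az|} A_z b_i$ and any point $x$ in the infinite $A$-orbit $\Omega$, we would get $\Omega = Ax = \bigcup_i A_z(b_i x)$, expressing $\Omega$ as a union of finitely many $A_z$-orbits. At least one of these orbits would then be infinite, violating the subdegree-finiteness of $A$.

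With that preparation, I would suppose for contradiction that $Y \cap aZ \neq \emptyset$ for every $a \in A$. Then $A$ is exhausted by the sets $\{a \in A : az = y\}$ as $(y,z)$ ranges over $Y \times Z$. Only pairs with $y \in Az$ contribute; for each such pair fix a witness $a_{y,z}$ with $a_{y,z} z = y$, so that $\{a \in A : az = y\} = a_{y,z} A_z$, a coset of the point stabilizer $A_z$. Hence $A$ is covered by the finite collection of cosets $\{ a_{y,z} A_z : z \in Z,\ y \in Y \cap Az \}$ of the subgroups $\{A_z : z \in Z\}$.

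The final step is to invoke B.~H.~Neumann's classical covering theorem: whenever a group is the union of finitely many cosets of subgroups, at least one of those subgroups has finite index. This forces some $A_z$ (with $z \in Z$) to be of finite index in $A$, so $Az$ is finite, contradicting the first step.

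The main obstacle is the last step; Neumann's covering theorem carries most of the weight. A more hands-on alternative would build $a$ one coordinate at a time, setting $a(z_1), a(z_2), \ldots$ in turn by successively multiplying by elements of the pointwise stabilizers $A_{z_1,\ldots,z_{i-1}}$, whose orbit on $z_i$ is finite by subdegree-finiteness. However this requires a lower bound on the sizes of those finite orbits relative to $|Y|$ that the hypotheses do not supply, so Neumann's covering theorem seems to be the cleanest tool and neatly sidesteps the bookkeeping.
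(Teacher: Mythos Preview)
Your proof is correct but follows a genuinely different route from the paper's. The paper argues directly: pick $x$ in an infinite $A$-orbit and an infinite sequence $(a_i)$ with the $a_ix$ pairwise distinct; if $Y\cap a_iZ\neq\emptyset$ held for infinitely many $i$, then pigeonhole on the finite set $Y\times Z$ would produce fixed $y\in Y$, $z\in Z$ and an infinite subsequence $(a_{i_j})$ with $a_{i_j}z=y$, and the elements $a_{i_1}a_{i_j}^{-1}\in A_y$ then witness an infinite $A_y$-orbit, contradicting subdegree-finiteness. This is entirely self-contained, needing nothing beyond pigeonhole. Your argument instead first isolates the pleasant side-fact that \emph{every} $A$-orbit is infinite, then recasts the contradiction hypothesis as a cover of $A$ by finitely many left cosets of the point-stabilizers $A_z$ ($z\in Z$), and finishes with B.~H.~Neumann's covering theorem. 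What you gain is a cleaner structural picture and the ``all orbits infinite'' observation, which the paper never states; what you pay is reliance on a nontrivial external result where the paper needs none. As a bonus, your route sidesteps the Axiom of Countable Choice that the paper invokes to form its sequence $(a_i)$: Neumann's theorem and your finite selection of witnesses $a_{y,z}$ require no choice principle at all.
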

 \begin{proof}  Suppose $x \in X$ lies in an infinite orbit of $A$.  Then there exists an infinite sequence $S=\{a_i \in A : i \in\N \} \subseteq A$ such that $a_i x = a_j x$ if and only if $i = j$.  Suppose there exist finite subsets $Y$ and $Z$ of $X$ such that $Y \cap a_i Z\neq\emptyset$ for infinitely many $i\in\N$.  Then there exist $y \in Y$ and $z \in Z$ and an infinite subsequence $\{a_{i_j} : j \in \N\} \subseteq S$ such that $a_{i_j}z = y$. For each $j\in\N$ define $b_j := a_{i_1}a_{i_j}^{-1}$, and notice that $b_j$ belongs to the stabilizer $A_y$. However, $b_j x = b_k x$  if and only if $j=k$, and so $A_y$ has an infinite orbit, a contradiction.
\end{proof}

Given a subgroup $A \leq \Sym(X)$, it is not hard to imagine how a sequence of permutations $\{a_i : i \in \N\}$ which all lie in $A$ might converge to some other permutation $a$; but this limit may not itself lie in $A$. When studying infinite permutation groups, it is usually necessary to consider separately groups that contain all their limit points and those that do not, as their behavior can be surprisingly different.

Since $X$ is countably infinite, there is a natural topology on $\Sym(X)$ called the {\em permutation topology}. The family of open sets in this topology is generated by the cosets of point-stabilizers of finite subsets of $X$. Thus a sequence of permutations $\{a_i : i \in \N\} \subseteq \Sym(X)$ converges to a permutation $a \in \Sym(X)$ if and only if for every finite set $Y$ there exists $n \in \N$ such that for all $i \geq n$, the permutation $a_i a^{-1}$ fixes $Y$ pointwise.

A subgroup $A \leq \Sym(X)$ is {\em closed} if it is closed in this topology, and for $B \leq A$ the {\em closure of $B$ in} $A$, denoted $\cl_A(B)$, consists of those elements $a\in A$ such that, for any finite subset $Y\subset X$, there exists some $b\in B$ such that $ay=by$ for all $y\in Y$. It is straightforward to show that $\cl_A(B)$ is itself a group. Thus $B \leq \cl_A(B) \leq A$. If $A=\Sym(X)$, we denote the closure of $B$ simply by $\cl(B)$. Notice that a group and its closure have the same orbits on the finite subsets of $X$.

If $G$ is a graph with countable vertex set $X$, then it is easily seen that $\A(G)$ is a closed subgroup of $\Sym(X)$ (see the proof of Corollary~\ref{countable_aut_gp_subdegree_inf}). This is a particular case of the following well-known result: a permutation group $A \leq \Sym(X)$ is closed if and only if it is the (full) automorphism group of a relational structure on $X$ (see \cite{Cameron07}). 

%
% --- Motion and countable groups
%
\section{Motion and countable groups}
\label{sec:motioncountable}

In this section we consider how motion affects the distinguishability of countable permutation groups. 

The distinguishing number of a permutation group $(A, X)$ can be easily influenced by local phenomena.  For example, identifying a vertex in a copy of $K_n$ with just one vertex of any $m$-distinguishable graph ($n>m+1$) will increase its distinguishing number from $m$ to at least $n-1$.  One consequence of large motion is to suppress such local effects.
In \cite{RuSu}, Russell and Sundaram exploit this property, giving a probabilistic proof of their influential Motion Lemma for finite group actions.  We now present an alternative combinatorial argument.

\begin{lemma} [Motion Lemma \cite{RuSu}\,]\label{MotionLemma}
Given a group $A$ acting faithfully on the finite set $X$, if $\m(A)\geq2\log_2|A|$, then $\D(A,X)=2$.
\end{lemma}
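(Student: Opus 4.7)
The plan is to bound the number of 2-colorings of $X$ preserved by some nonidentity element of $A$ and show that it is strictly less than $2^{|X|}$, the total number of 2-colorings; by a pigeonhole argument this yields a 2-coloring preserved only by the identity, i.e.\ $\D(A,X)=2$. The core observation is that a 2-coloring is preserved by a permutation $a$ if and only if it is constant on each cycle of $a$.

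First I would fix $a\in A\setminus\{1\}$ and count the colorings preserved by $a$. Decomposing $X$ into cycles of $a$, the $|X|-\m(a)$ fixed points contribute $2^{|X|-\m(a)}$ independent choices, while the cycles contained in $\supp(a)$ all have length at least $2$, so there are at most $\m(a)/2$ of them, contributing at most $2^{\m(a)/2}$ choices. Hence the number of 2-colorings preserved by $a$ is at most $2^{|X|-\m(a)/2}$.

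Next, I would sum over the $|A|-1$ nonidentity elements and apply $\m(a)\geq \m(A)\geq 2\log_2|A|$. The number of 2-colorings preserved by at least one nonidentity element is at most
\[
(|A|-1)\cdot 2^{|X|-\m(A)/2} \;\leq\; (|A|-1)\cdot 2^{|X|-\log_2|A|} \;=\; \frac{|A|-1}{|A|}\cdot 2^{|X|} \;<\; 2^{|X|}.
\]
Therefore some 2-coloring is preserved by no nonidentity element of $A$, giving $\D(A,X)=2$.

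The argument is essentially a short union-bound calculation, so there is no real obstacle; the only subtlety worth flagging is the factor-of-$2$ savings that arises because each cycle of $a$ on $\supp(a)$ has length $\geq 2$, and it is precisely this that justifies the coefficient $2$ in the hypothesis $\m(A)\geq 2\log_2|A|$. Replacing the probabilistic framing in \cite{RuSu} by this direct counting version will also make it clearer, later in the paper, how (and why) the argument must be modified when $X$ and $A$ are infinite.
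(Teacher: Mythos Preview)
Your argument is correct and is essentially the same as the paper's: both count the $2$-colorings preserved by a fixed nonidentity $a$ via its cycle decomposition (using that each nontrivial cycle has length $\geq 2$ to get the bound $2^{|X|-\m(a)/2}$), then take a union bound over $A\setminus\{1\}$ and compare with $2^{|X|}$. The only cosmetic difference is that the paper records the inequality as $2^{m/2}>|A|-1$ rather than your equivalent $\frac{|A|-1}{|A|}\cdot 2^{|X|}<2^{|X|}$.
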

\begin{proof} Let $n=|X|$ and let $\m=\m(A)$. For any nonidentity permutation $a\in A$, the number of 2-colorings preserved by $a$ is $2^c$, where $c$ is the number of cycles of $a$ in $X$.  If $a$ preserves some coloring, then all elements of $X$ in the same cycle must be assigned the same color.  There are $n-\m(a)$ singleton cycles, and $\supp(a)$ decomposes
into at most $\m(a)/2$ other cycles, giving
$$c\leq n-\m(a)+\m(a)/2 \leq n-m/2.$$
Thus the total number of colorings preserved by all the various nonidentity elements of $A$ is at most $(|A|-1)2^{n-m/2}$.  If $2^{m/2}>|A|-1$, then the total number of 2-colorings so preserved is less than the total number $2^n$ of 2-colorings.  We've shown that if $2^m\geq |A|^2$, then $\D(A,X)=2$.
\end{proof}

Since any given 2-coloring is likely to be preserved by more than one nonidentity element of $A$, there is a great deal of over-counting in this proof; it is possible to obtain sharper results when additional conditions are placed on $(A, X)$. However, in many contexts the above lemma suffices.

Recall that a  {\em Frobenius group} is a non-regular transitive permutation group in which only the identity fixes two points (and hence every 2-subset is a base).

\begin{proposition}[\cite{CoTu}] \label{list}
 In each of the following instances, all but finitely many of the group actions $(A,X)$ have distinguishing number 2:
 \begin{enumerate}
 \item   \label{list:group}  $X$ is a finite group and $A=\A(X)$;
 \item \label{list:vector}  $X$ is a finite vector space and $A=\A(X)$;
 \item \label{list:base}  $A$ is a finite transitive permutation group having a base of given size;
 \item \label{list:map}   $X$ is the vertex set of a finite map $M$ and $A=\A(M)$;
 \item \label{list:cyclic} $A$ is a finite transitive permutation group with cyclic point-stabilizer;
 \item \label{list:frob} $A$ is a finite Frobenius group.
\end{enumerate}
\end{proposition}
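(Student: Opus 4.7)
The plan is to apply the Motion Lemma (Lemma~3.1) separately in each of the six families. For every class I would produce two quantitative estimates: an upper bound on $|A|$ that is polynomial (or sub-polynomial) in $|X|$, and a lower bound on $\m(A)$ that grows essentially linearly in $|X|$. Whenever $\m(A)\geq 2\log_2|A|$ the Motion Lemma immediately delivers $\D(A,X)=2$, and the exceptional set in each family is the (necessarily finite) set of $(A,X)$ where the numerical inequality fails. So the heart of the argument is purely a collection of motion/order estimates.

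For cases (i), (ii), (iv), (v), (vi) short arguments suffice. In case~(i), the fixed points of a nontrivial automorphism of a finite group $X$ form a proper subgroup, hence $\m(\A(X))\geq|X|/2$, while the classical bound $|\A(X)|\leq|X|^{\log_2|X|}$ (valid because $X$ is generated by at most $\log_2|X|$ elements, so an automorphism is determined by its values there) gives $2\log_2|\A(X)|\leq 2(\log_2|X|)^2$; the required inequality then holds for all large $|X|$. Case~(ii) is analogous, using that a nontrivial element of $GL_d(q)$ fixes a proper subspace and so moves at least $q^{d-1}(q-1)$ vectors while $|GL_d(q)|<q^{d^2}$. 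Case~(vi) is the cleanest: in a Frobenius group every nonidentity element fixes at most one point, so $\m(A)\geq|X|-1$, and the hypothesis that every pair is a base gives $|A|\leq|X|(|X|-1)$. Cases~(iv) and~(v) proceed similarly, using $|\A(M)|\leq 4|E(M)|$ for finite maps (and the fact that a map automorphism fixing a flag fixes the component) and $|A|=|X|\cdot|A_x|$ with $A_x$ cyclic of bounded order for cyclic point-stabilizers, together with an elementary argument that nontrivial automorphisms in these settings move a positive fraction of $X$.

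The main obstacle is case~(iii). The naive bound $|A|\leq|X|(|X|-1)\cdots(|X|-b+1)\leq|X|^b$ only gives $2\log_2|A|\leq 2b\log_2|X|$, so one needs a matching lower bound on $\m(A)$. The crucial observation is that if $a\neq 1$ fixes a set $F$ pointwise then $F$ cannot contain a base; exploiting transitivity by pushing $F$ around under $A$ and counting over cosets of pointwise stabilizers forces $|F|=o(|X|)$ for fixed $b$, so $\m(A)$ grows essentially linearly in $|X|$. Once this is in hand the Motion Lemma disposes of all but finitely many $(A,X)$ of each base size. I expect this quantitative step -- converting the existence of a bounded-size base into a near-linear lower bound on motion -- to be the main effort, and it is where the detailed arguments of~\cite{CoTu} are concentrated.
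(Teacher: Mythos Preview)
The paper itself does not prove this proposition: it is quoted from \cite{CoTu}, and the only commentary is the sentence ``Each proof in \cite{CoTu} is a direct application of the Motion Lemma.''  Your plan is therefore exactly the plan the paper indicates, and your estimates for \ref{list:group}, \ref{list:vector} and \ref{list:frob} are correct as written.  For \ref{list:base}, your intuition sharpens to a clean inequality: if $Y=(y_1,\dots,y_b)$ is an ordered base and $a\neq 1$, then no translate $gY$ can lie inside $\operatorname{Fix}(a)$, so counting pairs $(g,i)$ with $gy_i\in\supp(a)$ gives
\[
|A|\ \le\ \sum_{i=1}^{b}\bigl|\{g\in A: gy_i\in\supp(a)\}\bigr|\ =\ b\cdot\m(a)\cdot\frac{|A|}{n},
\]
using transitivity for the last equality; hence $\m(A)\ge n/b$, which together with $|A|\le n^{b}$ makes the Motion Lemma apply once $n\ge 2b^{2}\log_2 n$.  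Case \ref{list:map} then reduces to \ref{list:base} with $b=3$, since a flag (a ``corner'' of a face) is a base.

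The one genuine slip is in \ref{list:cyclic}: you write ``$A_x$ cyclic of bounded order,'' but no bound on $|A_x|$ is assumed, and in fact a cyclic subgroup of $S_{n-1}$ can have order as large as Landau's function, so the order estimate you propose does not go through.  Nor is the accompanying claim that nontrivial elements ``move a positive fraction of $X$'' automatic here: a power of the generator of $A_x$ could in principle fix all but one short orbit.  The route taken in the paper's infinite analogue (Theorem~\ref{thm:infinitelist}\ref{list:infcyclic}) is instead to extract a base of size~$2$ from a cyclic stabilizer and then invoke \ref{list:base}; that is the reduction you should aim for, rather than a direct size/motion estimate.
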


Each proof in \cite{CoTu} is a direct application of the Motion Lemma.
However, one must not presume that elementary applications of the Motion Lemma are always sufficiently powerful to determine the distinguishing number of any class of finite groups. For example, it is known that all but finitely many finite primitive permutation groups that are neither symmetric nor alternating have distinguishing number 2 (see \cite{CNS}).  The proof of this result depends upon the Classification of the Finite Simple Groups. We know of no obvious way to utilize motion to obtain a proof that avoids the Classification.\\

The Motion Lemma tells us that if a group $A$ acts with sufficiently large motion on a finite set $X$, then $(A, X)$ must be 2-distinguishable. If one now considers the situation in which $X$ is infinite, one might guess that infinite motion is ``sufficiently large" to ensure $D(A, X) = 2$. For countably infinite groups, this guess is correct (Lemma~\ref{lem:CountableMotionLemma}), but as will be seen in Section~\ref{sec:uncountable}, this is not true for groups with larger cardinality.

\begin{lemma} \label{lem:CountableMotionLemma}
Let $A\leq\Sym(X)$, where both $A$ and $X$ are countably infinite.  If $A$ has infinite motion, then $\D(A,X)=2$.
\end{lemma}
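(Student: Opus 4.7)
The plan is to build a $2$-coloring $c\colon X\to\{0,1\}$ by a straightforward diagonal/back-and-forth construction, using the countability of $A$ to enumerate the permutations that must be killed and using infinite motion to guarantee room at each step.

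Since $A\setminus\{1\}$ is countable, enumerate it as $a_1,a_2,\ldots$. I will recursively construct a nested sequence of finite subsets $\emptyset=F_0\subseteq F_1\subseteq F_2\subseteq\cdots$ of $X$ together with a partial coloring $c_n\colon F_n\to\{0,1\}$ extending $c_{n-1}$, in such a way that at stage $n$ the permutation $a_n$ is guaranteed not to preserve any total extension of $c_n$. Concretely, at stage $n$ I want to pick a witness $x_n\in\supp(a_n)$ with $x_n,a_nx_n\notin F_{n-1}$, add both points to $F_{n-1}$ to form $F_n$, and set $c_n(x_n)=0$ and $c_n(a_nx_n)=1$. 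Once the construction is complete, let $c$ be any extension to $X$ of $\bigcup_n c_n$; then for each $n$ we have $c(a_nx_n)\neq c(x_n)$, so no nonidentity $a_n$ preserves $c$, giving $\D(A,X)=2$.

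The only thing to check is that the witness $x_n$ always exists. At stage $n$ the set $F_{n-1}\cup a_n^{-1}(F_{n-1})$ is finite, whereas $\supp(a_n)$ is infinite by the infinite motion hypothesis. Hence
\[
\supp(a_n)\setminus\bigl(F_{n-1}\cup a_n^{-1}(F_{n-1})\bigr)\neq\emptyset,
\]
and any $x_n$ in this set satisfies $x_n\neq a_nx_n$ (as $x_n\in\supp(a_n)$), $x_n\notin F_{n-1}$, and $a_nx_n\notin F_{n-1}$. Thus the colors $c_n(x_n)=0$ and $c_n(a_nx_n)=1$ are freely assignable without conflicting with $c_{n-1}$.

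I expect no real obstacle here beyond bookkeeping: infinite motion is exactly the hypothesis needed to keep the ``forbidden'' finite set $F_{n-1}\cup a_n^{-1}(F_{n-1})$ from exhausting $\supp(a_n)$, and the countability of $A$ is exactly what allows a single $\omega$-indexed recursion to visit every nontrivial group element. The argument implicitly uses countable choice to select the witnesses $x_n$, matching the convention declared in Lemma~\ref{subdegree}. This construction is the infinitary analogue of the finite Motion Lemma: rather than counting preserved colorings, one simply destroys each threatening permutation individually, which is feasible precisely when $|A|\le\aleph_0$ and every nonidentity element moves infinitely many points.
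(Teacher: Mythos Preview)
Your proof is correct and follows essentially the same diagonal construction as the paper: enumerate the nonidentity elements of $A$, and at stage $n$ use infinite motion to select a point $x_n\in\supp(a_n)$ whose color (and that of $a_nx_n$) has not yet been committed, then commit them to different colors. The paper phrases this as building a set $Y=\{y_n\}$ disjoint from $Y'=\{a_ny_n\}$ and coloring $Y$ black and its complement white, while you track a growing finite partial coloring; your explicit use of the forbidden set $F_{n-1}\cup a_n^{-1}(F_{n-1})$ is arguably a bit cleaner bookkeeping, but the underlying idea is identical.
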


\begin{proof}
Enumerate the set $X$ as $\{x_i: i \in \N\}$, and the elements of $A$ as $\{a_i: i\in\N\cup\{0\}\}$, with $a_0$ being the identity. Let $i$ be the least natural number such that $x_i \in \supp(a_1)$, and define $y_1 := x_i$. Proceeding inductively, suppose we know $y_1, \ldots, y_n$ and define $y_{n+1}:=x_i$ where $i$ is the least natural number such that $x_i \in \supp(a_{n+1}) \setminus \{y_1, a_1 y_1, \ldots, a_{n-1}y_{n-1}, a_n y_n\}$.  Such a natural number $i$ always exists because $\supp(a_{n+1})$ is infinite. The sets $Y:=\{y_n : 0 < n < |A|\}$ and $Y':=\{a_n y_n:0 < n < |A|\}$ are disjoint subsets of $X$, and every nonidentity element in $A$ moves an element in $Y$ to an element in $Y'$. Thus, coloring the elements of $Y$ black and the elements of $X \setminus Y$ white describes a distinguishing coloring of $(A, X)$.
\end{proof}

Using this lemma, we obtain the following infinitary version of Proposition~\ref{list}.

\begin{theorem} \label{thm:infinitelist}
 In each of the following instances, the group action $(A,X)$ is $2$ distinguishable:
\begin{enumerate}
 \item   \label{list:infgroup}  $X$ is a finitely generated, infinite group and $A=\A(X)$;
 \item \label{list:infvector} {\normalfont (\cite[Theorem 3.1]{Ch2})} $X$ is a finite-dimensional vector space over an infinite field $\mathfrak{K}$, and $A=\A(X)$ (i.e. the general linear group on $X$);
 \item \label{list:infbase}   $X$ is countable and $A$ is an infinite, subdegree-finite permutation group having a finite base;
\item \label{list:infmap}   $X$ is the vertex set of a locally finite, connected map $M$ with at least one vertex of valence at least 3, and $A=\A(M)$ is infinite;
 \item \label{list:infcyclic} $X$ is countable and $A$ is an infinite, subdegree-finite permutation group with a cyclic point-stabilizer;
 \item \label{list:inffrob} $X$ is countable and $A$ is an infinite subdegree-finite Frobenius group.
  \end{enumerate}
\end{theorem}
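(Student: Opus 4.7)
The plan is to deduce each item from Lemma~\ref{lem:CountableMotionLemma} by verifying that $A$ is countable and that $\m(A)$ is infinite. Item (ii) is cited verbatim from the indicated source, so I concentrate on the remaining five cases, each of which fits a common high-level template.

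Countability of $A$ is easy. In case (i), any automorphism of $X$ is determined by its values on a finite generating set of size $n$, so $A\hookrightarrow X^n$ and $|A|\leq\aleph_0$. In cases (iii)--(vi) the group $A$ admits a finite base $B=\{b_1,\ldots,b_n\}$: by hypothesis in (iii); by the defining Frobenius property in (vi), since any pair of points is a base; in (v), one constructs $B$ iteratively by appending at each stage a point moved by some nontrivial element of the current pointwise stabilizer, and observing that the resulting descending chain of subgroups of the cyclic group $A_x$ eventually becomes trivial; in (iv), one uses a vertex $v$ of valence $\geq 3$ together with a small number of vertices drawn from its flag structure to kill all map automorphisms. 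With $B$ in hand, the injection $a\mapsto(ab_1,\ldots,ab_n)$ gives $|A|\leq\aleph_0$.

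For infinite motion the heart of the proof is a uniform contradiction argument driven by Lemma~\ref{subdegree}. Suppose some $a\in A\setminus\{1\}$ has finite support $S$. With a finite base $B$ as above, $A$ must have an infinite orbit---otherwise $A$ would embed into the finite symmetric group on the union of the $A$-orbits of base points, forcing $A$ to be finite. Lemma~\ref{subdegree}, applied with $Y=S\cup B$ and $Z=S$, yields $g\in A$ with $gS\cap(S\cup B)=\emptyset$. Then $gag^{-1}$ has support $gS$, disjoint from $B$, so it fixes $B$ pointwise; since $B$ is a base, $gag^{-1}=1$, hence $a=1$, a contradiction. Case (i) needs a separate argument, since $\A(X)$ may not admit a finite base in the permutation-group sense: the fixed-point set of a group automorphism is a subgroup of $X$, and a cofinite subgroup of an infinite group is necessarily the whole group (any proper coset of an infinite subgroup would itself be infinite), so no nonidentity element of $\A(X)$ can have finite support.

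I expect the main obstacle to lie in producing a finite base in case (iv), where the valence-$\geq 3$ hypothesis is essential (consider the two-way infinite path, whose automorphism group admits no finite base) and the argument must exploit the combinatorial map structure beyond the underlying graph. A secondary subtlety arises in case (v) should $A_x$ happen to be infinite cyclic, for then the naive descending chain of base stabilizers need not terminate in finitely many steps; in that event one argues more directly, using conjugates of $a$ together with the orbit structure of a generator of $A_x$ to reach the same contradiction. Once these rigidity facts are in place, the template of the preceding paragraph closes each remaining case uniformly.
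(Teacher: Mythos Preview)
Your overall strategy matches the paper's: in each case establish that $A$ is countable with infinite motion, then invoke Lemma~\ref{lem:CountableMotionLemma}. Your treatment of (i), (iii), (iv), and (vi) is essentially identical to the paper's---the paper also reduces (iv) and (vi) to (iii) via a finite base (a three-vertex ``corner'' of a face at a vertex of valence $\geq 3$ for maps; any two-point set for Frobenius groups), and then runs exactly your conjugation argument through Lemma~\ref{subdegree} to obtain infinite motion. Your proof that $A$ has an infinite orbit in (iii) differs cosmetically from the paper's (which observes that $|AY|=|A|$ is infinite while $Y$ is finite), but both are immediate.

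The one real divergence is (v). The paper dispatches it in one line: if $A_x=\langle a\rangle$ with $ay\neq y$, it declares $\{x,y\}$ a base and invokes (iii). Your iterative descending-chain construction is more cautious, and with reason: the paper's claim is not justified as stated (take $a=(y\;z)(p\;q\;r)$; then $a^2$ fixes both $x$ and $y$ but is nontrivial, so $\{x,y\}$ is not a base). Your chain argument does terminate and produce a finite base whenever $A_x$ is \emph{finite} cyclic, so in that regime your argument is actually more careful than the paper's. The infinite-cyclic case you flag is a genuine residual issue that neither you nor the paper resolves: when $A_x\cong\Z$ and $A$ is subdegree-finite, every finite pointwise stabilizer inside $A_x$ has the form $\langle a^k\rangle$ with $k\neq 0$, so no finite base exists and the reduction to (iii) is unavailable. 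Your instinct that a separate argument is needed there is correct.
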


\begin{proof} \ref{list:infgroup} Let $S$ be any finite generating set for $X$.  Then $X$ is countably infinite because only finitely many words of any given finite length can be formed from $S$.  Since an automorphism of $X$ is determined by its action on $S$, the group $A$ is countable. Each non-identity element of $A$ has infinite motion, because the set of elements of $X$ fixed by any nonidentity automorphism is a proper subgroup of $X$ and hence has an infinite complement. If $A$ is finite but non-trivial, then $D(A, X) = 2$ by Lemma~\ref{MotionLemma}, while if $A$ is countably infinite, then $D(A, X) = 2$ by Lemma~\ref{lem:CountableMotionLemma}.

\ref{list:infvector} Since the set of fixed points of any nonidentity element of $A$ forms a proper subspace of $X$, we infer that $\m(A)$ is infinite. If $\frak{K}$ is countably infinite, then so is $A$, and the result follows by Lemma~\ref{lem:CountableMotionLemma}. However, we present an  argument that holds for a field of arbitrary cardinality.

Let $\{u_1,\ldots, u_n\}$ be an ordered basis for $X$.  There exists an element $c\in \frak{K}$ whose multiplicative order is greater than $n^2$, because $\frak{K}$ is infinite while there are only finitely many solutions $x$ to the equation $x^m=1$ for $0\leq m\leq n$.  Let $Y=\{ c^iu_j : 0\leq i <j \leq n\}$.  Since the 1-dimensional subspace $\langle u_j\rangle$ contains exactly $j$ elements of $Y$,  any linear transformation $a$ that stabilizes $Y$ setwise must, for each $j$, fix $\langle u_j \rangle$ and permute the vectors in $\{c^iu_j : 0 \leq i < j\}$. Since $c$ has order greater than $n$, the only possibility is that $a$ fixes each basis element $u_i$.  Hence $a$ is the identity transformation.

\ref{list:infbase} Suppose $Y \subset X$ is a finite base for $A$; that is, the pointwise stabilizer $A_{(Y)}$ is trivial. Since $X$ is countable and $Y$ is finite, there are countably many images of $Y$ in the set $AY=\{aY:a\in A\}$.  But $|A| = |A : A_{(Y)}| = |AY|$, and so $A$ is countably infinite.  Because $AY$ is infinite and $Y$ is finite, $A$ has an infinite orbit $Ay$ for some $y\in Y$.  Let $b$ be any nonidentity element of $A$ and suppose that $Z=\supp(b)$ is finite.  Then by Lemma \ref{subdegree}, for some $a\in A$, we have $aZ\cap Y=\emptyset$.  But then $aba^{-1}$ has support disjoint from $Y,$ and hence fixes $Y$ pointwise.  However, $aba^{-1}\neq1$, a contradiction since $Y$ is a base.  Hence $A$ has infinite motion.  Since $A$ is also countably infinite, $\D(A,X)=2$ by Lemma~\ref{lem:CountableMotionLemma}.

\ref{list:infmap} A well known property of maps is that if vertex $x$ has valence at least 3, then $x$ has neighbors $y,z$ such that $\{x,y,z\}$ has trivial pointwise stabilizer (these three vertices define a ``corner" of a face).  Thus $A$ has a finite base, and so $\D(A,X)=2$ by part \ref{list:infbase} above.

\ref{list:infcyclic} If some stabilizer $A_x$ is cyclic and not trivial, then $A_x = \langle a \rangle$ for some $a \in A$ and there exists $y \in X$ such that $a y \not = y$.  Thus $\{x,y\}$ is a base for $A$, and the result follows again from part \ref{list:infbase}.

\ref{list:inffrob} By definition, $A$ is an infinite subdegree-finite permutation group having a base of size $2$, and so $\D(A,X)=2$ by part \ref{list:infbase}.
\end{proof}

Notice that very little about the $2$-distinguishability of $(A, X)$ can be deduced from only the existence or non-existence of a finite base for $(A, X)$. There are examples of $2$-distinguishable groups with no finite base (e.g. $T$ is an infinite, locally finite, homogeneous tree and $X = VT$ with $A = \A T$).  There are also examples of groups with a finite base that are not $2$-distinguishable (e.g., the disjoint union $G$ of a complete graph $K_n$ and a double ray has a base of size $n+3$ and distinguishing number $n$; if one wants a connected example, just add an extra vertex adjacent to all vertices of $G$).\\

The following result of
D.~M.~Evans (\cite{Evans87}) is traceable independently to D.~W.~Kueker (\cite[Theorem 2.1]{kreker}) and G.~E.~Reyes (\cite[Theorem 2.2.2]{Reyes}).  It is independent of the Continuum Hypothesis.

\begin{theorem}[{\cite[Theorem 1.1]{Evans87}}] \label{evans} Suppose $X$ is a  countably infinite set. If $A$ and $B$ are closed subgroups of $\Sym(X)$ and $B \leq A$, then either $|A:B| = 2^{\aleph_0}$ or $B$ contains the pointwise stabilizer in $A$ of some finite subset of $X$.  \qed
\end{theorem}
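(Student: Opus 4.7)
The plan is to prove the contrapositive: assume $B$ is a closed subgroup of $A$ that contains no pointwise stabilizer $A_{(F)}$ of a finite $F\subseteq X$, and show that $|A:B|\geq 2^{\aleph_0}$. Combined with the trivial bound $|A:B|\leq |A|\leq |\Sym(X)|=2^{\aleph_0}$, this yields the desired equality. The method is a Cantor-scheme (binary-tree) construction inside $A$.

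Enumerate $X=\{x_0,x_1,\ldots\}$, set $F_n=\{x_0,\ldots,x_{n-1}\}$, and write $A_n:=A_{(F_n)}$. I would build elements $a_s\in A$ and integers $n_s\in\N$ indexed by finite binary strings $s\in 2^{<\omega}$, with $n_{s\cdot i}>n_s$, enforcing two properties. First, both $a_{s\cdot i}$ and its inverse $a_{s\cdot i}^{-1}$ should agree with $a_s$ and $a_s^{-1}$ respectively on $F_{n_s}$; this ensures that along every branch $\sigma\in 2^\omega$ the sequence $(a_{\sigma|_k})_k$ is Cauchy in $\Sym(X)$, so it converges to a permutation $a_\sigma$, and $a_\sigma\in A$ because $A$ is closed. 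Second, writing $m:=n_{s\cdot 0}=n_{s\cdot 1}$, the ``splitting element'' $g_s:=a_{s\cdot 0}^{-1}a_{s\cdot 1}$ should satisfy $g_s\notin A_m B A_m$; this is what will force distinct branches into distinct $B$-cosets.

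The key technical step is arranging the second property. At a node $s$ I would first choose $g\in A\setminus B$ in the pointwise stabilizer of the finite set $F_{n_s}\cup a_s^{-1}(F_{n_s})$, which is possible because by hypothesis no pointwise stabilizer of a finite set is contained in $B$. Such a $g$ and its inverse fix $F_{n_s}$ and $a_s^{-1}(F_{n_s})$, which supplies the first property when I set $a_{s\cdot 0}:=a_s$ and $a_{s\cdot 1}:=a_sg$ (note $g_s=g$). Next I would exploit closedness of $B$ to prove the auxiliary identity $\bigcap_{m\in\N}A_m BA_m=B$: if $g$ belonged to every $A_m BA_m$, we could write $g=u_mb_mv_m$ with $u_m,v_m\in A_m$ and $b_m\in B$, and since $u_m,v_m\to 1$ in the permutation topology as $m\to\infty$, continuity of the group operations gives $b_m=u_m^{-1}gv_m^{-1}\to g$, so closedness of $B$ forces $g\in B$. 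Because $g\notin B$, some $m>n_s$ witnesses $g\notin A_m BA_m$; take this $m$ as the common value $n_{s\cdot 0}=n_{s\cdot 1}$.

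To conclude, for distinct branches $\sigma\neq\tau$ with longest common prefix $s$ (with $\sigma$ descending through $s\cdot 0$ and $\tau$ through $s\cdot 1$), iterated use of the first property gives $u:=a_{s\cdot 0}^{-1}a_\sigma\in A_m$ and $v:=a_{s\cdot 1}^{-1}a_\tau\in A_m$, so $a_\sigma^{-1}a_\tau = u^{-1}g_s v$, which lies outside $B$ by the second property. Hence the $2^{\aleph_0}$ branches of the tree realize $2^{\aleph_0}$ distinct $B$-cosets, as required. I expect the auxiliary identity $\bigcap_m A_m B A_m = B$ to be the main obstacle: it is the single point where closedness of $B$ is genuinely used, and once it is in hand the rest of the proof is careful bookkeeping with the tree.
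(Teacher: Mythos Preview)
The paper does not give its own proof of this theorem; it is quoted from \cite{Evans87} (with antecedents in \cite{kreker} and \cite{Reyes}) and marked with a \qed{} immediately after the statement. So there is nothing in the paper to compare against.

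That said, your argument is correct and is essentially Evans' original proof: a Cantor-scheme construction where the hypothesis ``no $A_{(F)}$ is contained in $B$'' supplies a splitting element $g\notin B$ at each node, closedness of $B$ yields the crucial identity $\bigcap_m A_m B A_m = B$ (so the splitting can be protected against later perturbations from $A_m$), and closedness of $A$ guarantees that each branch converges to an element of $A$. Your bookkeeping is right: fixing both $F_{n_s}$ and $a_s^{-1}(F_{n_s})$ is exactly what is needed so that both $(a_{\sigma|_k})$ and $(a_{\sigma|_k}^{-1})$ converge pointwise, hence the limit lies in $\Sym(X)$ and not merely in $X^X$. The only cosmetic remark is that your upper bound $|A:B|\le 2^{\aleph_0}$ is not needed for the theorem as stated (the conclusion is $|A:B|=2^{\aleph_0}$ \emph{or} the stabilizer condition, and you have produced $2^{\aleph_0}$ distinct cosets; equality then follows because $|A|\le|\Sym(X)|=2^{\aleph_0}$, exactly as you say).
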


Using this theorem, one can often determine the distinguishing number of countably infinite permutation groups without explicitly requiring them to have infinite motion.

%Theorem 3.6
\begin{theorem}\label{thm:evanscorr}  Let $A$ be a closed permutation group on a countably infinite set $X$ such that $\aleph_0\leq|A|<2^{\aleph_0}$.  Then the following two statements hold and are independent of the Continuum Hypothesis.
\begin{enumerate}
\item  \label{list:Evans'Consequence1}  $|A|=\aleph_0$ and $A$ has a finite base and finite distinguishing number.
\item \label{list:Evans'Consequence2} If $A$ is subdegree finite, then $(A,X)$ has infinite motion and $D(A,X)=2$.
\end{enumerate}
\end{theorem}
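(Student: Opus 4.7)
The plan is to extract both parts from Evans' Theorem (Theorem~\ref{evans}) by a single application with $B=\{1\}$. Since $\{1\}$ is trivially closed in $\Sym(X)$ and $B\leq A$, the hypothesis $|A|<2^{\aleph_0}$ rules out the alternative $|A:B|=2^{\aleph_0}$; consequently $B$ must contain the pointwise stabilizer $A_{(Y)}$ of some finite $Y\subseteq X$. In other words, $Y$ is a finite base of $A$. This is the one and only set-theoretic step, and because Evans' Theorem is itself independent of the Continuum Hypothesis, so is our argument. Write $Y=\{y_1,\dots,y_n\}$; note $n\geq 1$ since $A\neq\{1\}$.

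For part~\ref{list:Evans'Consequence1}, the map $a\mapsto(ay_1,\dots,ay_n)$ is an injection of $A$ into $X^n$ (injectivity is exactly the base property), so $|A|\leq|X|^n=\aleph_0$; combined with $|A|\geq\aleph_0$ this gives $|A|=\aleph_0$. For a finite distinguishing coloring, assign $y_i$ the color $i+1$ for each $i$ and color every point of $X\setminus Y$ with color $1$. Each $y_i$ is the unique point of its color, so any color-preserving permutation must fix $Y$ pointwise and therefore lies in $A_{(Y)}=\{1\}$. Hence $\D(A,X)\leq n+1<\infty$.

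For part~\ref{list:Evans'Consequence2}, assume further that $A$ is subdegree-finite. First observe that $A$ must have an infinite orbit: otherwise every $A$-orbit, including the orbit of the tuple $(y_1,\dots,y_n)$, would be finite, forcing $|A|<\aleph_0$ and contradicting~\ref{list:Evans'Consequence1}. Now suppose, for contradiction, that some $b\in A\setminus\{1\}$ has finite support $Z:=\supp(b)$. By Lemma~\ref{subdegree} applied with the finite sets $Y$ and $Z$, there exists $a\in A$ with $Y\cap aZ=\emptyset$. Since $\supp(aba^{-1})=aZ$, the conjugate $aba^{-1}$ fixes $Y$ pointwise, hence $aba^{-1}\in A_{(Y)}=\{1\}$, forcing $b=1$ -- a contradiction. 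So every nonidentity element of $A$ has infinite support, i.e.\ $A$ has infinite motion. Because $|A|=|X|=\aleph_0$, Lemma~\ref{lem:CountableMotionLemma} then delivers $\D(A,X)=2$.

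There is essentially no obstacle once Evans' Theorem has been invoked; the only points requiring care are the justification that the base is nonempty (immediate from $|A|\geq\aleph_0$) and the verification that $A$ has an infinite orbit before applying Lemma~\ref{subdegree}, both of which are short. The main conceptual content is simply the observation that in the closed, sub-continuum regime, Evans' dichotomy collapses to the existence of a finite base, after which the Countable Motion Lemma and Lemma~\ref{subdegree} together do all the combinatorial work.
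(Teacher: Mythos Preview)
Your proof is correct and follows essentially the same approach as the paper: apply Evans' Theorem with $B=\{1\}$ to obtain a finite base, then use the base to get countability and finite distinguishing number, and finally invoke the conjugation-by-Lemma~\ref{subdegree} argument followed by Lemma~\ref{lem:CountableMotionLemma} for part~\ref{list:Evans'Consequence2}. The only cosmetic difference is that the paper packages the entire argument for part~\ref{list:Evans'Consequence2} by citing Theorem~\ref{thm:infinitelist}\ref{list:infbase} (and its proof), whereas you reproduce that argument inline with a few extra details (the explicit injection into $X^n$, the explicit $(n{+}1)$-coloring, and the verification that $A$ has an infinite orbit).
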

\begin{proof}
\ref{list:Evans'Consequence1}  Let $B:=\langle1\rangle$, which is obviously closed in $A$.  Then Theorem~\ref{evans} implies that $A$ has a finite base $Y$.  Thus $D(A,X)\leq|Y|<\aleph_0$.

\ref{list:Evans'Consequence2}   Suppose that $A$ is subdegree finite.  By part \ref{list:Evans'Consequence1}, $A$ is countable and has a finite base, and so we may apply Theorem~\ref{thm:infinitelist}~\ref{list:infbase} to deduce that $D(A,X)=2$. That $A$ has infinite motion follows from the proof of Theorem~\ref{thm:infinitelist}~\ref{list:infbase}.
\end{proof}

We deduce from this theorem two corollaries describing the distinguishing number of graphs whose automorphism group is countable.

\begin{corollary}\label{countable_aut_gp_subdegree_inf}  Suppose $G$ is a graph whose vertex set is countably infinite. If  $\aleph_0 \leq |\A(G)| < 2^{\aleph_0}$, then $\A(G)$ is closed and has a finite
base, $|\A(G)| = \aleph_0$ holds,  and  $\D(G)$ is finite.  This statement is independent of the Continuum Hypothesis.
\end{corollary}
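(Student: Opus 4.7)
The plan is straightforward: the corollary is essentially a direct translation of Theorem~\ref{thm:evanscorr}\ref{list:Evans'Consequence1} into the graph-theoretic setting, so the real work is confirming that the hypotheses of that theorem apply with $A=\A(G)$ and $X=VG$. Since $VG$ is countably infinite by assumption, and $\aleph_0\leq|\A(G)|<2^{\aleph_0}$, the only hypothesis left to verify is that $\A(G)$ is a closed subgroup of $\Sym(VG)$.

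First I would prove that $\A(G)$ is closed. Suppose $a\in\cl(\A(G))$. For every pair $\{u,v\}\subseteq VG$, the set $Y=\{u,v\}$ is finite, so by the definition of the closure in the permutation topology there exists $b\in\A(G)$ with $au=bu$ and $av=bv$. Since $b$ is an automorphism, $\{u,v\}\in EG$ if and only if $\{bu,bv\}=\{au,av\}\in EG$. Thus $a$ preserves adjacency, and applying the same reasoning to $a^{-1}$ (which also lies in $\cl(\A(G))$, since closures of groups are groups) shows that $a\in\A(G)$. Hence $\A(G)$ is closed. Alternatively, one can simply invoke the stated fact that a permutation group is closed exactly when it is the automorphism group of a relational structure, applied to the binary edge relation of $G$.

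With $\A(G)$ closed and $\aleph_0\leq|\A(G)|<2^{\aleph_0}$, Theorem~\ref{thm:evanscorr}\ref{list:Evans'Consequence1} immediately gives that $|\A(G)|=\aleph_0$, that $\A(G)$ has a finite base $Y\subseteq VG$, and that $\D(G)=\D(\A(G),VG)\leq|Y|<\aleph_0$. Independence of the Continuum Hypothesis is inherited from Theorem~\ref{evans} via Theorem~\ref{thm:evanscorr}, since the proofs of those results make no appeal to CH.

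There is no real obstacle here; the only step requiring thought is the closure argument, and even that is a standard observation once one recalls how convergence in the permutation topology interacts with the finitary nature of the adjacency relation. The corollary is essentially a dictionary translation, and its main purpose is to package Evans' dichotomy in a form convenient for subsequent graph-theoretic applications.
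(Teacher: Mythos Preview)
Your proposal is correct and follows essentially the same approach as the paper: verify that $\A(G)$ is closed via the finitary nature of the edge relation, then invoke Theorem~\ref{thm:evanscorr}\ref{list:Evans'Consequence1}. The only cosmetic difference is that the paper phrases the closure argument in terms of convergent sequences while you work directly with the definition of $\cl(\A(G))$; your ``if and only if'' step (using that $b$ is an automorphism) is in fact slightly tidier, since it handles non-edges simultaneously and makes the $a^{-1}$ remark redundant.
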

\begin{proof} As remarked previously, $\A(G)$ is a closed permutation group. To see why, suppose $\{ a_i : i \in \N \} \subseteq \A(G)$ is a sequence that converges to some $a \in \Sym(VG)$. Given any edge $\{x, y\}$ in $G$, for all sufficiently large $i$ the image of this $2$-element subset of $VG$ under both $a$ and $a_i$ is the same; in particular the image of this edge under $a$ is an edge in $G$, and so $a \in \A(G)$. The corollary follows immediately from Theorem~\ref{thm:evanscorr}~\ref{list:Evans'Consequence1}.
\end{proof}

\begin{corollary}\label{countable_aut_gp}  Let $G$ be a locally finite, connected graph such that  $\aleph_0 \leq |\A(G)| < 2^{\aleph_0}$.  Then $\A(G)$ has a finite base, $|\A(G)| = \aleph_0$,  $\m(G) = \aleph_0$, and  $\D(G)=2$. This holds independently of the Continuum Hypothesis.
\end{corollary}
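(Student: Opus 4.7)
The plan is to reduce the corollary to Theorem~\ref{thm:evanscorr}~\ref{list:Evans'Consequence2} by verifying its hypotheses, using Corollary~\ref{countable_aut_gp_subdegree_inf} to handle the closedness and countability conditions and using local finiteness to supply subdegree-finiteness.

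First, I would verify that $VG$ is countably infinite. Since $G$ is connected and locally finite, a breadth-first enumeration starting from any vertex reaches every vertex after finitely many steps and adds only finitely many new vertices at each step, so $|VG|\leq\aleph_0$. Moreover, $VG$ must be infinite, for otherwise $\A(G)$ would be finite, contradicting $|\A(G)|\geq\aleph_0$. Hence $VG$ is countably infinite, and Corollary~\ref{countable_aut_gp_subdegree_inf} applies to yield that $\A(G)$ is closed, has a finite base, $|\A(G)|=\aleph_0$, and $\D(G)$ is finite.

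Second, I would note that local finiteness forces $\A(G)$ to be subdegree-finite, as remarked in Section~\ref{sec:preliminaries}. Concretely, since automorphisms preserve graph distance, every orbit of the point-stabilizer $\A(G)_x$ is contained in some sphere around $x$; local finiteness together with connectedness makes each such sphere finite, so every orbit of $\A(G)_x$ is finite.

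Finally, with $VG$ countably infinite, $\A(G)$ closed and of cardinality strictly between $\aleph_0$ and $2^{\aleph_0}$ (inclusive on the left), and $\A(G)$ subdegree-finite, Theorem~\ref{thm:evanscorr}~\ref{list:Evans'Consequence2} applies directly and delivers both $\m(G)=\aleph_0$ and $\D(G)=2$, independently of the Continuum Hypothesis. There is no real obstacle beyond unpacking the definitions: the hard analytic and set-theoretic content has already been absorbed into Theorem~\ref{evans}, Lemma~\ref{lem:CountableMotionLemma}, and Theorem~\ref{thm:infinitelist}~\ref{list:infbase} used upstream.
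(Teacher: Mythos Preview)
Your proposal is correct and follows essentially the same route as the paper: verify that $VG$ is countably infinite, that $\A(G)$ is closed, and that local finiteness yields subdegree-finiteness via the sphere argument, then apply Theorem~\ref{thm:evanscorr}. The only cosmetic difference is that you route closedness through Corollary~\ref{countable_aut_gp_subdegree_inf} and spell out the countability of $VG$ explicitly, whereas the paper asserts closedness directly and leaves countability of $VG$ implicit.
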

\begin{proof} Suppose $G$ is a connected, locally finite graph, and $x \in VG$. Each sphere of radius $n \in \N$ centered at $x$ is finite and every vertex lies in one of these $n$-spheres. Each orbit of the stabilizer $\A(G)_x$ is a subset of some $n$-sphere centered at $x$, and is therefore finite. Thus $\A(G)$ is subdegree-finite and closed, and the corollary follows from Theorem~\ref{thm:evanscorr}.
\end{proof}

Corollary~\ref{countable_aut_gp} also follows from the elegant result of R.\,Halin (\cite[Theorem 6]{Hal-73}) that the automorphism group of a locally finite connected graph is uncountable if and only if it has no finite base.

For orbit-equivalence, Theorem~\ref{thm:evanscorr} implies the following.

\begin{corollary} \label{cor:evansoecorr} Suppose that $A$ is an infinite subdegree-finite, closed permutation group.  If  $A$ is strongly orbit-equivalent to some proper subgroup, then $A$ is uncountable.
\end{corollary}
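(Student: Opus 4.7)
The plan is to prove the contrapositive: assuming $A$ is countable (together with the standing hypotheses that $A$ is infinite, subdegree-finite, and closed), I will show $A$ cannot be strongly orbit-equivalent to any proper subgroup. Since $|A| = \aleph_0 < 2^{\aleph_0}$, Theorem~\ref{thm:evanscorr}\ref{list:Evans'Consequence2} applies and yields $\D(A,X) = 2$. Fix a distinguishing $2$-coloring of $X$ and let $Y \subseteq X$ be the set of points of one color; by the definition of distinguishability, the setwise stabilizer $A_Y$ is trivial.

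Now suppose, for contradiction, that $B \leq A$ is a proper subgroup with $B$ strongly orbit-equivalent to $A$. Then $Y$ and every $aY$ (for $a \in A$) lie in a common $B$-orbit on $2^X$, so for each $a \in A$ there exists $b \in B$ with $aY = bY$. Then $b^{-1}a \in A_Y = \{1\}$, forcing $a = b \in B$. Hence $A = B$, contradicting that $B$ was a proper subgroup. This completes the contrapositive.

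There is no serious obstacle: the work has already been done in Theorem~\ref{thm:evanscorr}, and the bridge from $2$-distinguishability to minimality under strong orbit-equivalence is the one-line argument above (essentially the remark from the introduction that a set with trivial setwise stabilizer forces equality of any orbit-equivalent subgroup with the whole group). The only point worth stressing explicitly is that the hypothesis $|A| < 2^{\aleph_0}$ required by Theorem~\ref{thm:evanscorr} is automatic once we assume $A$ is countable, which is why the corollary can be stated without invoking the Continuum Hypothesis.
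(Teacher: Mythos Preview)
Your proof is correct and essentially identical to the paper's: both invoke Theorem~\ref{thm:evanscorr} to obtain a set $Y\subseteq X$ with trivial setwise stabilizer, then use strong orbit-equivalence to find $b\in B$ with $bY=aY$ for each $a\in A$, forcing $a=b$ and hence $A=B$. The only cosmetic difference is that the paper phrases its contradiction hypothesis as $\aleph_0\leq|A|<2^{\aleph_0}$ rather than $|A|=\aleph_0$, but these coincide here since Theorem~\ref{thm:evanscorr}\ref{list:Evans'Consequence1} already forces $|A|=\aleph_0$.
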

\begin{proof} Suppose that $\aleph_0\leq |A|<2^{\aleph_0}$ and that $B < A$ with $A$ and $B$  strongly orbit-equivalent. Because $A$ is $2$-distinguishable (by Theorem~\ref{thm:evanscorr}), there exists a set $Y \subseteq X$  whose setwise stabilizer $A_{\{Y\}}$ is trivial.  If $a \in A$, then there exists $b \in B$ such that $aY = bY$, because $A$ and $B$ are strongly orbit-equivalent. Hence $a^{-1}b \in A_{\{Y\}}$ and so $a = b$.  Thus $A\subseteq B$, a contradiction.
\end{proof}

Notice that subdegree-finite permutation groups are locally compact subgroups of $\Sym(X)$. The authors of \cite{ML} conjectured that if $X$ is countably infinite and $A, B \leq \Sym(X)$ are strongly orbit-equivalent and closed, then $A = B$. They comment that even knowing whether this is true for locally compact groups would be interesting. The above corollary establishes this conjecture for subdegree-finite countable groups when $B$ is a subgroup of $A$.\\

 \begin{corollary}\label{fg} Let $A$ be an infinite, closed, subdegree-finite permutation group on a countably infinite set $X$  If all point-stabilizers in $A$ are countable, then they are all finite, and $D(A, X) = 2$. 
\end{corollary}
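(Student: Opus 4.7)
The plan is to reduce to the countable case already handled by Theorem~\ref{thm:evanscorr}, and then to extract finiteness of point-stabilizers from the profinite structure that subdegree-finiteness imposes.

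For the first step, I would fix any $x\in X$ and apply the orbit--stabilizer identity $|A|=|A_x|\cdot|Ax|$. The orbit $Ax$ is a subset of the countable set $X$, and $A_x$ is countable by hypothesis, so $|A|\le\aleph_0$; combined with $A$ being infinite, this forces $|A|=\aleph_0$. Theorem~\ref{thm:evanscorr}\,\ref{list:Evans'Consequence2} then applies directly and gives $\D(A,X)=2$ (and that $A$ has infinite motion).

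To finish, I would show each $A_x$ is profinite and invoke the cardinality dichotomy for profinite groups. Since $A$ is closed in $\Sym(X)$ and $A_x$ is a point-stabilizer, $A_x$ is itself closed; subdegree-finiteness says every $A_x$-orbit $O_i$ on $X$ is finite. Writing $X=\bigsqcup_i O_i$ and sending an element of $A_x$ to the tuple of its restrictions to the $O_i$ identifies $A_x$, as a topological group, with a closed subgroup of the compact profinite group $\prod_i \Sym(O_i)$; the two topologies on $A_x$ coincide because a finite subset of $X$ meets only finitely many (finite) orbits. Hence $A_x$ is profinite, and a standard fact says any profinite group is either finite or has cardinality at least $2^{\aleph_0}$ (build a strictly descending chain of open subgroups each of index $\ge 2$ and count compatible threads through the inverse limit). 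The countability hypothesis on $A_x$ therefore forces $A_x$ to be finite.

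The only real obstacle is the cardinality dichotomy for profinite groups. It is classical, but if one prefers to avoid invoking it the Baire category theorem supplies a one-line substitute: a countable compact Hausdorff space must contain an isolated point, translation-invariance in a topological group promotes this to every point being isolated (i.e.\ the group is discrete), and a compact discrete group is finite.
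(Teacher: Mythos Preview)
Your argument is correct, and the reduction to Theorem~\ref{thm:evanscorr} via $|A|=|A_x|\cdot|Ax|\le\aleph_0$ is exactly what the paper does as well. The difference lies entirely in how you establish that each $A_x$ is finite.

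The paper does not invoke profiniteness at all. Instead, it recycles the finite base $Y$ already produced by Theorem~\ref{thm:evanscorr}\,\ref{list:Evans'Consequence1}: since $A_{(Y)}$ is trivial, the map $a\mapsto (ay)_{y\in Y}$ is injective on $A_x$, and its image lies in the finite product $\prod_{y\in Y} A_x y$ (each factor finite by subdegree-finiteness). This is a one-line observation that uses nothing beyond what is already on the table.

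Your route is more structural: you identify $A_x$ with a closed subgroup of the compact group $\prod_i\Sym(O_i)$ and appeal to the finite/$2^{\aleph_0}$ dichotomy for profinite groups (or Baire category on compact Hausdorff groups). This is perfectly sound, and your check that the permutation topology matches the product topology is the right thing to verify. What your approach buys is that the finiteness of $A_x$ is derived purely from the hypotheses of the corollary (closedness, subdegree-finiteness, countability of $A_x$) without passing through Evans's theorem; in principle one could prove the finiteness first and then invoke Theorem~\ref{thm:evanscorr}. The paper's approach, on the other hand, is shorter and avoids importing any topological-group machinery, since the finite base is already available as a by-product of the $\D(A,X)=2$ argument.
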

 \begin{proof}  Since $A$ is infinite and closed, $X$ is countably infinite, and all point-stabilizers in $A$ are countable, it folows that $A$ is countably infinite, in which case Theorem \ref{thm:evanscorr}~\ref{list:Evans'Consequence1} applies.  That is, $A$ has a finite base $Y$.  If additionally $A$ is subdegree-finite, then $D(A,X)=2$ by Theorem \ref{thm:evanscorr}~\ref{list:Evans'Consequence2}.  Finally, $A_x$ is finite for all $x\in X$, since orbits of elements of $Y$ under any point-stabilizer $A_x$ are finite.
\end{proof}

It follows that $D(A,X)=2$ holds in the particular case where the point-stabilizers are finitely generated

%
% --- Motion and uncountable groups
%
\section{Motion and uncountable groups}
\label{sec:uncountable}

Lemma~\ref{lem:CountableMotionLemma} and its consequences are extensions to countable groups of Russell and Sundaram's Motion Lemma. One might be tempted to try to extend this lemma to groups of higher cardinality: {\em If $A$ is a group acting faithfully with infinite motion on the countably infinite set $X$, then $D(A, X) = 2$.}  However, this statement is false: the group $\A(\Q, <)$, which consists of those permutations of the rational numbers which preserve the usual ordering $<$, has infinite motion on $\Q$ and is known to have distinguishing number $\aleph_0$ (\cite{LNS}).  Note that $\A(\Q, <)$ is the automorphism group of the directed graph on $\Q$, with an edge directed from $x$ to $y$ if and only if  $x<y$; it is therefore easily seen to be closed and not subdegree-finite. The only examples we have which show that Lemma~\ref{lem:CountableMotionLemma} cannot be extended to groups of higher cardinality are not subdegree-finite. For this reason we propose the following conjecture.

\begin{IMConjectureForGroups}
If $A$ is a closed, subdegree-finite permutation group with infinite motion on a countably infinite set $X$, then $\D(A,X)=2$.
\end{IMConjectureForGroups}

Since automorphism groups of connected, locally finite graphs are closed and subdegree-finite, the above conjecture implies the following conjecture, originally posed in  \cite{tu-06}.

\begin{IMConjectureForGraphs}
If $G$ is an infinite, locally finite, connected graph and if $\A(G)$ has infinite motion, then $\D(G)=2$.
\end{IMConjectureForGraphs}

Although the local finiteness of $G$ implies the subdegree-finiteness of $\A(G)$, the converse obviously does not hold. Moreover, $\A(\Q,<)$ gives rise to a directed graph with infinite motion that is not locally finite and is not finitely distinguishable, but we know of no (undirected) graph with these properties.

The Infinite Motion Conjecture for Graphs has been studied in the context of growth.  For example, it is not difficult to show that a graph with linear growth and infinite motion is $2$-distinguishable. The Infinite Motion Conjecture is known to hold for locally finite graphs with super-linear but subquadratic growth \cite{cuimle-xx}.  Further refinements of these methods show that the conjecture holds for locally finite graphs with superpolynomial but subexponential growth \cite{Le-xx}.

The Infinite Motion Conjecture for Graphs may fail for graphs of larger cardinality.  For example, consider the star $S_{\textswab n}$ which, for some cardinal number $\textswab n > \aleph_0$, consists of $\textswab{n}$ rays emanating from a single common vertex. Then $\A(S_{\textswab n})$ has infinite motion, but $\D(S_{\textswab n}) > 2$, because in any 2-coloring of $S_{\textswab n}$ there must be at least two rays with the same 2-coloring. But then the automorphism that interchanges them and fixes all other vertices is nontrivial but preserves the coloring.  \\

When applied to graphs, the Motion Lemma \ref{MotionLemma} asserts that a graph $G$ with motion $\m(G)$ is 2-distinguishable  if  $\m(A)\geq 2\log_2|A|$.  When $\m(G)$ is infinite, this inequality is equivalent to  $2^{\m(G)} \geq |\A(G)|$ and leads to the Motion Conjecture for Graphs of \cite{cuimle-xx}.

\begin{MotionConjectureForGaphs} Let $G$ be an infinite graph of arbitrary cardinality. Then $2^{\m(G)} \geq |\A(G)|$ implies that $\D(G) = 2$.
\end{MotionConjectureForGaphs}

We  now turn to proving that $2$-distinguishable groups are dense in the class of permutation groups with infinite motion. We begin by proving for completeness two well-known facts relating the properties of having a finite base, being subdegree-finite, and closure.  

\begin{proposition}  If $A \leq \Sym(X)$ has a finite base, then $A$ is closed.
\end{proposition}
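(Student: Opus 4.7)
The plan is to show directly that every limit point of $A$ in the permutation topology already lies in $A$. Let $Y \subseteq X$ be a finite base for $A$ and let $a \in \cl(A)$; the goal is to exhibit an element of $A$ that equals $a$ as a permutation of $X$.

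First, apply the closure condition to the finite set $Y$ itself: there exists $b \in A$ such that $bx = ax$ for every $x \in Y$. The crucial observation is that this $b$ is \emph{unique}. Indeed, if $b, b' \in A$ both agree with $a$ on $Y$, then $b^{-1}b'$ fixes every point of $Y$, so $b^{-1}b' \in A_{(Y)} = \{1\}$, using the defining property of a base. This uniqueness is where the finite-base hypothesis does all the work.

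Next I will upgrade equality on $Y$ to equality on all of $X$. Fix an arbitrary $x \in X$ and apply the definition of closure to the finite set $Y \cup \{x\}$: there exists $c \in A$ such that $c$ and $a$ agree on $Y \cup \{x\}$. In particular $c$ agrees with $a$ on $Y$, so by the uniqueness established above, $c = b$. Hence $bx = cx = ax$. Since $x \in X$ was arbitrary, $a = b$ as permutations of $X$, and so $a \in A$. This shows $\cl(A) \subseteq A$, i.e., $A$ is closed.

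There is no real obstacle here; the argument is a two-step application of the definition of closure, with the finite-base hypothesis entering exactly once to guarantee that the ``candidate'' element $b \in A$ recovered from the values of $a$ on $Y$ is unambiguous. The only thing worth noting in the write-up is that $a$ is already a bijection of $X$ (since $\cl(A) \leq \Sym(X)$), so no separate verification that $a$ is a permutation is needed.
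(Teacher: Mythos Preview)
Your proof is correct and follows essentially the same approach as the paper: pick $b\in A$ agreeing with $a$ on the finite base $Y$, then for each $x$ pick $c\in A$ agreeing with $a$ on $Y\cup\{x\}$ and use the base property to force $c=b$ (the paper phrases this last step via $ab^{-1}$ rather than via uniqueness of $b$, but the argument is the same).
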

\begin{proof}  If $Y$ is a finite base for $A$ and $a \in \cl(A)$, then there exists $b \in A$ such that $a$ and $b$ agree on $Y$.  Since $ab^{-1} \in \cl(A)$ and $Y$ is finite, for each $x \in X$ there exists $c \in A$ such that $c$ and $ab^{-1}$ agree on $Y \cup \{x\}$.  But any such $c$ must fix the base $Y$ pointwise and is therefore the identity. Thus $ab^{-1}$ must fix every element of $X$; that is, $a=b \in A$. \end{proof}

\begin{proposition}  If $B \leq A\leq \Sym(X)$ and $B$ is subdegree-finite, then $\cl_A(B)$ is also subdegree-finite.
\end{proposition}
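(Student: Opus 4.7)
The plan is to show directly from the definition of closure that, for each pair $x,y\in X$, the orbit of $y$ under $\cl_A(B)_x$ is contained in the orbit of $y$ under $B_x$; since the latter is finite by hypothesis, this suffices.

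First I would fix $x,y\in X$ and an arbitrary element $c\in\cl_A(B)_x$. By the definition of $\cl_A(B)$ applied to the finite set $Y=\{x,y\}$, there exists $b\in B$ such that $b$ and $c$ agree on $Y$; that is, $bx=cx$ and $by=cy$. Since $c$ fixes $x$, the first equation gives $bx=x$, so $b\in B_x$. The second equation then gives $cy=by\in B_x\cdot y$. Because this holds for every $c\in\cl_A(B)_x$, we conclude
\[
\cl_A(B)_x\cdot y \;\subseteq\; B_x\cdot y.
\]
As $B$ is subdegree-finite, the right-hand side is finite, so the left-hand side is too. Since $y$ was arbitrary, every orbit of $\cl_A(B)_x$ on $X$ is finite, and since $x$ was arbitrary, $\cl_A(B)$ is subdegree-finite.

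There is no real obstacle here; the only subtlety is to remember that the closure is taken inside $A$ (not inside $\Sym(X)$), but this is immaterial to the argument because the defining condition for membership in $\cl_A(B)$, namely finite-set approximation by elements of $B$, is all that is used. In particular, $B$ is not assumed to be closed, and $A$ plays no role beyond containing $B$ and $\cl_A(B)$.
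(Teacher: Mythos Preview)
Your proof is correct and follows essentially the same argument as the paper: both apply the defining approximation property of $\cl_A(B)$ to the two-element set $\{x,y\}$ to find $b\in B_x$ with $by=cy$, yielding $\cl_A(B)_x\cdot y\subseteq B_x\cdot y$. The paper phrases the conclusion slightly more strongly (noting that $B_x$ and $\cl_A(B)_x$ have \emph{exactly} the same orbits, the reverse inclusion being trivial since $B\leq\cl_A(B)$), but this extra observation is not needed for the proposition.
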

\begin{proof} If $x,y,z\in X$ and if $a\in\cl_A(B)$ satisfies $ax=x$ and $ay=z$, then there exists some element $b\in B$ that also satisfies $bx=x$ and $by=z$; hence the stabilizers $B_x$ and $[\cl_A(B)]_x$ have exactly the same orbits.
\end{proof}

On the other hand, as shown by the following example, neither of the two properties of 2-distinguishability nor having infinite motion is preserved under closure.

 \begin{example}\label{PxK2}{\em  Let the graph $G$ be the strong product $P\boxtimes K_2$, where $P$ is the double ray. We  label $VG$ as the disjoint union $\{x_i : i\in \Z\} \cup \{y_i : i\in \Z\}$, where two distinct vertices in $VG$ are adjacent in $G$ if and only if their indices differ by at most $1$.  Let $A=\A(G)$.  Let $b$ be the translation that adds $1$ to all subscripts, and let be $c$ the reflection that multiplies each subscript by $-1$.   Finally, let $d$ be the involution that transposes $x_i$ and $y_i$ if and only if $i$ is a square.  Let $B=\langle b,c,d \rangle$. We list some properties of $A$ and $B$.
\begin{enumerate}
 \item For each $i$, $A$ contains the transposition $s_i$ whose support is $\{x_i,y_i\}$, and so $\m(A)=2$.  Moreover, $A$ is uncountable since it contains all finite and all infinite products of the transpositions $s_i$.
 \item  $\D(A,X)>2$. Given any 2-coloring of $VG$, if some $x_i$ and $y_i$ have the same color, then $s_i$ preserves the coloring.  Otherwise, for some product $t$ of some of the $s_i$, all the images of the $x_i$ have one color and all the images of the $y_i$ have the other color.  In this case, $tbt$ preserves the given coloring.
 \item  $B$ has infinite motion, and $A$ is subdegree-finite.
 \item $\D(B,X)=2$  by Lemma~\ref{lem:CountableMotionLemma}, since $B$ is  countably infinite with infinite motion.
 \item  $s_0 \in \cl_A(B)$. Let $Y$ be any finite subset of $VG$.   For some $n>0$ we have $x_i, y_i \not\in Y$ whenever $|i|\geq n$.  Then $b^{-n^2}db^{n^2}\in B$ and agrees with $s_0$ on $Y$, since $d$ interchanges $x_i$ with $y_i$ for $i=n^2$ and fixes $x_j$ and $y_j$ for $(n-1)^2<j<(n+1)^2, j\neq n^2$.
\item $\cl_A(B)=A$, since $s_i=b^{i}s_0b^{-i}\in \cl_A(B)$ for all $i$.
 \end{enumerate}}
 \end{example}

Although $2$-distinguishability and infinite motion are not preserved under closure, permutation groups with distinguishing number 2 are, in a certain sense, dense in the class of permutation groups with infinite motion. Our proof assumes the Axiom of Choice.

 \begin{theorem} \label{dense} Let $A$ be a group of permutations of the countably infinite set $X$, and suppose $A$ has infinite motion. Then $A=\cl_A(B)$ for some subgroup $B$ satisfying $\D(B,X)=2$.
 \end{theorem}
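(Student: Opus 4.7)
The plan is to extract from $A$ a countable subgroup $B$ whose closure in $A$ is all of $A$. Since any non-identity element of $B$ is a non-identity element of $A$ and therefore has infinite support, $B$ inherits infinite motion from $A$, so Lemma~\ref{lem:CountableMotionLemma} immediately yields $\D(B,X)=2$ provided $B$ is non-trivial.

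First I would observe that the permutation topology on $\Sym(X)$ is second-countable when $X$ is countable: the family of basic open sets $\{c\in\Sym(X):c|_Y=f\}$, where $Y$ ranges over finite subsets of $X$ and $f:Y\to X$ over injections, is countable. Hence $A$ with the subspace topology is second-countable and therefore separable. Using the Axiom of Choice, I pick a countable dense subset $B_0\subseteq A$ by selecting one representative from each non-empty basic open set of $A$, and set $B:=\langle B_0\rangle$.

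Since $B_0$ is countable, so is $B$. The density of $B_0$ in $A$ translates directly into the required closure statement: for any $a\in A$ and any finite $Y\subset X$, the basic open set $\{c\in A:c|_Y=a|_Y\}$ is non-empty, so it contains some $b\in B_0\subseteq B$ with $b|_Y=a|_Y$. Thus $a\in\cl_A(B)$, and because $\cl_A(B)\subseteq A$ by definition, we conclude $\cl_A(B)=A$. Excluding the degenerate case $A=\{\id\}$ (where the theorem is vacuous or outside the authors' conventions), $B$ is non-trivial and countable with infinite motion, so Lemma~\ref{lem:CountableMotionLemma} gives $\D(B,X)=2$.

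There is no serious obstacle: the work is essentially topological bookkeeping. The key point is that second-countability of $\Sym(X)$ provides exactly the countability needed to distill a dense countable subset, and taking the subgroup it generates preserves countability without enlarging the closure beyond $A$. Note that this proof does not require $A$ to be closed, subdegree-finite, or even the automorphism group of any particular structure; the argument works for an arbitrary permutation group on a countably infinite set as long as its motion is infinite.
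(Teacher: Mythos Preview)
Your proof is correct and follows essentially the same strategy as the paper: exhibit a countable dense subgroup $B\leq A$ and then invoke Lemma~\ref{lem:CountableMotionLemma}. The only difference is packaging---you appeal to second-countability (hence separability) of $\Sym(X)$, whereas the paper explicitly enumerates the pairs $(Y_i,aY_i)$ of finite subsets and their $A$-images and selects one representative $a_{ij}\in A$ for each; the group generated by these representatives is the same countable dense subgroup your topological argument produces.
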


 \begin{proof} This theorem follows from the fact that, when $X$ is countably infinite, there is a countable subgroup $B$ of $A$ such that $\cl_A(B)=A$; since $A$ has infinite motion, so does $B$, and therefore $\D(B,X)=2$ by Lemma~\ref{lem:CountableMotionLemma}.

The existence of such a group $B$ is well-known, but we describe its construction here for completeness. Since $X$ is countably infinite, we may enumerate the family of its finite subsets as $\{Y_i : i \in \Z\}$. The set $\{(Y_i, aY_i) : a \in A, i \in \Z\}$ is also countably infinite, and so (assuming the Axiom of Choice) we may choose a countably infinite set of elements $\{a_{i j} : i, j \in \Z\} \subseteq A$ such that $\{(Y_i, a_{i j} Y_i) : i, j \in \Z\} = \{(Y_i, aY_i) : a \in A, i \in \Z\}$. Let $B$ denote the group generated by the set $\{a_{i j} : i, j \in \Z\}$.  Then $B$ and $A$ have the same action on the finite subsets of $X$ and therefore $\cl_A(B) = A$. Since $B$ is countably generated, $|B| \leq \aleph_0$. \end{proof}

Theorem \ref{dense} suggests that it might be possible to obtain a proof of the Infinite Motion Conjecture for Permutation Groups by ``bootstrapping" from a countably infinite subgroup of an arbitrary group $A$ of which $A$ is the closure.  Unfortunately, Example~\ref{PxK2} shows that $2$-distinguishability need not be preserved under closure, even  under the strong assumption of subdegree-finiteness.  On the other hand, in Example~\ref{PxK2}, the larger group $A$ has finite motion.  Thus, a possibility remains that a bootstrapping argument may be effective.\\

The following corollary of Theorem~\ref{dense} shows that groups that are minimal with respect to strong orbit-equivalence are dense in the class of permutation groups with infinite minimal degree.

\begin{corollary} \label{cor:oedense} Let $A$ be a group of permutations of the countably infinite set $X$, and suppose the minimal degree of $A$ is infinite. Then there exists a subgroup $B \leq A$ satisfying
\begin{enumerate}
\item
	$A = \cl_A(B)$; and
\item
	$B$ is not strongly orbit-equivalent to any proper subgroup.
\end{enumerate}
\end{corollary}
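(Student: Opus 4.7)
The plan is to assemble this corollary directly from Theorem~\ref{dense} and the minimality-under-strong-orbit-equivalence argument already exploited in the proof of Corollary~\ref{cor:evansoecorr}. Since the minimal degree of a permutation group is just another name for its motion (see Section~\ref{sec:preliminaries}), Theorem~\ref{dense} applies to $A$ and furnishes a subgroup $B\leq A$ with $\cl_A(B)=A$ and $\D(B,X)=2$; in particular, condition~(i) of the corollary is immediate.

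For condition~(ii), I would pick a $2$-distinguishing subset $Y\subseteq X$ for $B$, so that the setwise stabilizer $B_{\{Y\}}$ in $B$ is trivial. I would then argue by contradiction, mimicking the proof of Corollary~\ref{cor:evansoecorr}: if some proper subgroup $C<B$ were strongly orbit-equivalent to $B$, then for every $b\in B$ there would exist $c\in C$ with $cY=bY$, whence $c^{-1}b\in B_{\{Y\}}=\{1\}$. This forces $b=c\in C$, so $B\subseteq C$, contradicting $C<B$. I do not expect any genuine obstacle here: the result is essentially a repackaging of Theorem~\ref{dense} in orbit-equivalence language, with the bridge between $2$-distinguishability and strong orbit-equivalence supplied by the short argument already used for Corollary~\ref{cor:evansoecorr}.
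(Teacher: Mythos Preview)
Your proposal is correct and follows essentially the same route as the paper: invoke Theorem~\ref{dense} (noting that minimal degree and motion coincide) to obtain $B$ with $\cl_A(B)=A$ and $\D(B,X)=2$, then apply the setwise-stabilizer argument from the proof of Corollary~\ref{cor:evansoecorr} to conclude that $B$ is minimal under strong orbit-equivalence. The only cosmetic difference is that the paper cites ``the proof of Theorem~\ref{dense}'' to record that $|B|\leq\aleph_0$, but this extra information is not actually used, so your slightly leaner formulation is fine.
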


\begin{proof} Since $A$ has infinite minimal degree, it has infinite motion. From the proof of Theorem~\ref{dense} there exists a subgroup $B \leq A$ such that $|B| \leq \aleph_0$ and $A = \cl_A(B)$ and $D(B, X) = 2$. Hence, $B$ is not strongly orbit-equivalent to any proper subgroup (see the proof of Corollary~\ref{cor:evansoecorr}).
\end{proof}

%
% --- Questions
%
\section{Questions}
\label{sec:questions}

We list four of the various questions that have arisen in the course of preparing this article.
\medskip

As remarked just after Proposition \ref{list}, the only known proof (cf.~\cite{CNS}) that there are only finitely many non-alternating, non-symmetric finite primitive permutation groups that are not $2$-distinguishable relies on the Classification of the Finite Simple Groups, and the link between primitivity and $2$-distinguishably (and thus orbit-equivalence) is not fully understood. Such a connection has also been observed in infinite groups (\cite[Corollary 2]{STW}) and is the subject of two interesting conjectures (\cite[Section 6]{LNS} and \cite[Conjecture 1.2]{ML}). Thus, the answer to the following question is of considerable interest.

\begin{question}  Is there an elementary proof, perhaps based on motion, that there are only finitely many finite primitive permutation groups, other than $A_n$ and $S_n$, satisfying $\D(A,X)>2$?
\end{question}

We do not know if the Infinite Motion Conjecture for Graphs requires the hypothesis that $G$ be locally finite.

\begin{question} \label{Q:LocallyFiniteGraphs}   Does there exist a connected graph $G$ with a countably infinite vertex set, such that $G$ is not locally finite, $G$ has infinite motion, and $\D(G)>2$?
\end{question}

We do not know whether the condition of being closed is necessary in the Infinite Motion Conjecture for Permutation Groups.  We thus expect an affirmative answer to the following question.

\begin{question}  Does there exist a subdegree-finite, non-closed permutation group $A$ on a countably infinite set $X$ such that $A$ has infinite motion and $\D(A,X)>2$?
\end{question}

Some of the finite results listed in Proposition~\ref{list} require transitivity, but their infinite analogues never mention transitivity.  If we require transitivity, can we then drop the requirement of closure or subdegree-finiteness?

\begin{question} Does there exist a transitive permutation group $A$ on a countably infinite set $X$ all of whose point-stabilizers are finitely generated and $\D(A,X)>2$?
\end{question}

Regarding this question, suppose that $A$ is transitive and $A_x$ is finitely generated for all $x \in X$. Since $X$ is countably infinite, it follows that $A$ is countably infinite. Thus if $A$ has infinite motion, then $\D(A,X)=2$ (by Lemma \ref{lem:CountableMotionLemma}).  On the other hand, if $A$ does not have infinite motion, the transitivity of $A$ makes it unlikely that $A_x$ is finitely generated for all $x$. Certainly if $A$ is closed and transitive then it must have infinite motion (for otherwise $A$ would be uncountable).\\

\vspace{3mm}

\noindent {\bf Acknowledgements}\\

Wilfried Imrich was partially supported by the Austrian Science Fund (FWF), project W1230, and ARRS Slovenia within the EUROCORES
Programme EUROGIGA/GReGAS of the European Science Foundation. Mark Watkins was partially supported by a grant from the Simons Foundation (\#209803 to Mark E.~Watkins).

%
% --- Bibliography
%

\end{document}